\newtheorem{thm}{Theorem}
\newtheorem{prop}{Proposition}
\theoremstyle{remark}
\newtheorem{rem}{Remark}
\theoremstyle{definition}
\newtheorem{ex}{Example}
\newcommand{\Hom}{\operatorname{Hom}}
\newcommand{\AAA}{\mathcal{ A}}
\newcommand{\GG}{\mathcal{ G}}
\newcommand{\LLL}{\mathcal{ L}}
\newcommand{\BB}{\mathcal{ B}}
\newcommand{\Q}{\mathbb{ Q}}
\newcommand{\R}{\mathbb{ R}}
\newcommand{\LA}{\mathbb{ L}}
\newcommand{\rk}{\operatorname{rk}}
\newcommand{\Z}{\mathbb{ Z}}
\DeclareMathOperator{\dg}{deg}
\begin{document}
\title[Gauge gropus and spaces of connections] {The rational homology ring  of the based loop space of the  gauge groups and the  spaces of connections on a four-manifold}
\author{Svjetlana Terzi\'c}
\address{Faculty of Natural Sciences and Mathematics, University of Montenegro, D\v zord\v za Va\v singtona bb,
         81000 Podgorica, Montenegro}
\email{sterzic@ac.me}
\subjclass[2000]{Primary 55P62, 55P35
Secondary  57R19, 81T13}
\date{}
\keywords{four-manifolds, Pontrjagin homology ring, gauge group, spaces of connections}
%\subjclass[2000]{Primary 57T20, 55P620; Secondary 55P35, 57T35.?????}

\begin{abstract}

We provide the rational-homotopic proof that the ranks of the homotopy groups of a simply connected four-manifold depend only on its second Betti number. We also consider the based loop spaces of  the gauge groups and the spaces of connections of 
a simply connected  four-manifold and, appealing to~\cite{TI} and using the models from  rational homotopy theory, we obtain the explicit formulas for their rational Pontrjagin homology rings.   
\end{abstract}

 \maketitle

\section{Introduction}
Rational homotopy theory can be seen as the  torsion free part of the  homotopy theory.  In the 1960's Sullivan proved that  simply connected topological spaces and continuous maps between them can be rationalized to  
topological spaces $X_{\Q}$ and  maps $f_{\Q} : X_{\Q}\to Y_{\Q}$ such that  $H_{k}(X_{\Q}) = H_{k}(X, \Q)$ and $\pi _{k}(X_{\Q})=\pi _{k}(X)\otimes \Q$.  The rational homotopy type of $X$ is defined to be the homotopy type of $X_{\Q}$ and the rational homotopy class of $f : X\to Y$ is defined to be  the homotopy class of $f_{\Q} : X_{\Q}\to Y_{\Q}$. Rational homotopy theory studies those  properties of spaces and maps between them which depend only on the rational homotopy type of a space and rational homotopy class of a map.

The main advantage of  rational homotopy theory is its great computability. This advantage is due to its explicit algebraic modeling obtained by Quillen~\cite{Q} and Sullivan~\cite{S}. As for the topological spaces the result is that the rational homotopy type of a topological space is determined by the isomorphism class of its algebraic model.

In this paper we continue our work on rational homotopy of simply connected four-manifolds, their gauge groups and spaces of connections. In that direction we  prove, only by the means of rational homotopy theory, that the rational homotopy groups of a simply connected four-manifold can be expressed only in terms of its second Betti number.  We also consider the based loop spaces  of the  gauge groups and the spaces of connection on a simply connected four manifold. Using the results from~\cite{TI} on the  rational cohomology structure of the gauge groups and the spaces of connection, we provide the explicit formulas for the rational Pontrjagin homology rings of the corresponding based loop spaces.

We refer to~\cite{FHT} for very detailed and comprehensive background on rational homotopy theory and its algebraic modeling.    
  
\section{On homotopy groups of simply connected four manifolds}

\subsection{Review of the known results}
Let  $M$ be a closed simply connected four-manifold. We recall the notion of the intersection form for $M$. The symmetric bilinear form
\[
Q_{M} : H^{2}(M, \Z) \times H^{2}(M, \Z) \to \Z,
\]
defined by
\[
Q_{M}(x,y) = \left\langle x\cup y, \left[ M\right]\right\rangle ,
\]
where $\left[ M\right]$ is the fundamental class for $M$, is called the intersection form for $M$. 

It is well known the result of Pontryagin-Wall~\cite{M} that the homotopy type of a simply connected four-manifold is classified by its intersection form. 

The intersection form $Q_{M}$ can be diagonalised over $\R$ with $\pm 1$ as the diagonal elements. Following the standard notation,  $b_{2}^{+}(M)$ denotes the number of $(+1)$ and $b_{2}^{-}(M)$ the number of $(-1)$ in the diagonal form for $Q_{M}$. The numbers  $b_{2}(M) = b_{2}^{+}(M)+b_{2}^{-}(M)$ and $\sigma (M) = b_{2}^{+}(M)-b_{2}^{-}(M)$  are known as the rank and the signature of $M$. 

 The rational homotopy type of a simply connected four-manifold is, by the result of~\cite{T}, classified by its rank and signature. 

The real cohomology algebra of $M$ for $b_{2}(M)\geq 2$  can be easily described using the  intersection form for $M$ as follows:
\begin{equation}\label{cohom}
H^{*}(M) \cong \R [x_1,\ldots x_{b_{2}^{+}},x_{b_{2}^{+}+1},\ldots ,x_{b_{2}}]/I,
\end{equation}
where $\deg x_i=2$, and the ideal $I$  is generated by the equalities 
\[
x_1^2=\cdots =x_{b_{2}^{+}}^2 = -x_{b_{2}^{+}+1}=\cdots =-x_{b_2^2},
\]
\[
x_ix_j=0,\;\; i\neq j.
\]
Any simply connected four-manifold $M$ is formal in the sense of rational homotopy theory.  The following explicit formulae  for the ranks of the third and the  fourth homotopy group  of $M$  were obtained in~\cite{T} by applying  Sullivan minimal model theory to  the  cohomology algebra of $M$:
\[
\rk \pi _{2}(M) = b_{2}(M),\;\; \rk \pi _{3}(M) = \frac{b_{2}(b_{2}+1)}{2}-1,\;\;
 \rk \pi _{4}(M) = \frac{b_{2}(b_{2}^2-4)}{3}.
 \]
The calculation procedure of the ranks of homotopy groups based on the Sullivan minimal theory carried out in~\cite{T}     suggested that the ranks of all homotopy groups of $M$ depend only on  the second Betti number of $M$, but we were not able to prove it explicitly using that method.  

A  formula for all   homotopy groups of a simply connected four manifold is   obtained  later in~\cite{DL} by proving that any simply connected four-manifold admits $S^1$-covering of the form $\# _{b_{2}(M)-1}S^{2}\times S^{3}$. It implies that
\[
\pi _{k}(M) = \left\{
\begin{array}{c}
\oplus _{b_{2}(M)}\Z \;\; \text{for}\;\; k=2,\\
\pi _{k}(\# _{b_{2}(M)-1}S^{2}\times S^{3}),\;\; \text{for}\;\; k>2
\end{array}
\right .
\]
This formula in particular proves  that the ranks of the homotopy groups of $M$ depend only on the second Betti number $b_{2}(M)$.  The recent results obtained in this direction are much stronger. Denote by $\Omega M$ the space of based loops on $M$. It is proved  in~\cite{Stiv} that for a simply connected four-manifolds $M$ and $N$ it holds:
\[
\Omega M\cong \Omega N \Longleftrightarrow H^{2}(M)\cong H^{2}(N).
\] 
This is done via  homotopy theoretic methods by decomposing $\Omega M$ into a product of spaces, up to homotopy:
\[
\Omega M\cong S^{1}\times \Omega (S^2\times S^3)\times \Omega (J\vee (J\wedge \Omega (S^2\times S^3))),
\]
where $J = {\bf \vee}_{i=1}^{k-2}(S^2\vee S^3)$ if $k>2$ and $J= \star$ if $k=2$.   
Since $H^{2}(M) = \pi _{2}(M) = \oplus _{b_{2}(M)}\Z $ we have  that the based loop spaces $\Omega M$ and $\Omega N$ are homotopy equivalent if and only if $b_{2}(M) = b_{2}(N)$. On the other hand, the application  of  the Sullivan minimal model theory to the  algebra~\eqref{cohom} gives that, in general, the ranks of homotopy groups of a simply connected four-manifold $M$  can be expressed only in terms of the  rank and possible signature of the  corresponding intersection form. This  together proves that the ranks of homotopy groups for $M$  depend only on $b_{2}(M)$.

We would like to mention here the results from~\cite{BAB} which give the explicit formulas on the ranks of   homotopy groups of the suspension of a finite simplicial complex:
\[
\rk \pi _{j+1}(\sum X) = \frac{(-1)^{j}}{j}\sum _{d|j} (-1)^{d}\mu (\frac{i}{d}) S_{d}(2-P(z)),
\]
where $P(z)$ is the Poincar\' e polynomial for $X$, by $\mu$ is denoted the  M\"obius function and $S_{d}$ is the Newton polynomial of the roots of the polynomial inverse to the polynomial  $2-P(z)$.

 In the case of a simply connected four-manifold $M$, the following explicit formula for the rational homotopy groups has been obtained recently   in~\cite{SS}:
\[
\rk (\pi _{n+1}(M)) = \sum _{d|n}(-1)^{n+\frac{n}{d}}\frac{\mu (d)}{d}\sum _{a+2b=\frac{n}{d}}(-1)^{b}{a+b \choose b}\frac{b_{2}^{a}}{a+b},
\]  
where $b_{2}$ denotes the second Betti number of $M$.

\subsection{The background from rational homotopy theory}
We provide here  the simple  proof, based only on the methods of rational homotopy theory,  that the rational homotopy groups of a simply connected four-manifold $M$ can be expressed only  in terms of  $b_{2}(M)$. In this way we complete the work~\cite{T} by the means of the  rational homotopy theory.

Let us recall some notions from  rational homotopy theory, for the background we refer to~\cite{FHT}. Let $TV$ denotes the tensor algebra on a graded vector spaces $V$. It is a graded  Lie algebra with the commutator bracket. The sub Lie algebra generated by $V$ is called the free graded Lie algebra on $V$ and it is denoted by $\LA _{V}$.  An element in $\LA _{V}$ is said to have the  bracket length $k$ if it is linear combination of the elements of the form $[v_1,\ldots, [v_{k-1},v_{k}],\ldots]$. We obtain the decomposition $\LA _{V} = \oplus \LA _{V}^{(i)}$, where $\LA _{V}^{(i)}$ consists of elements having the bracket length $i$.  Then it holds $TV \cong U\LA _{V}$,  where $U\LA _{V}$ is the universal enveloping algebra for $\LA _{V}$.  Let $(\LA _{V}, d)$ be a differential graded free Lie algebra.  The linear part of $d$ is the differential $d_{V} : V\to V$ defined by $dv - d_{V}v \in \oplus _{k\geq 2}\LA _{V}^{(k)}$. An algebra $(\LA _{V}, d)$ is said to be minimal if the linear part $d_{V}$ of its differential vanishes.

The Quillen functor $C_{*}$  is a functor from the connected chain Lie algebras $\{(L; d_{L})\}$  to one-connected cocommutative chain coalgebras $\{ C_{*}(L; d_{L})\}$. By  dualizing Quillen's  construction  one obtains commutative differential graded algebras $\{(C^{*}(L; d_{L})\}$. In the case when  $(L; d_{L})$ is a chain algebra, $C^{*}(L; d_{L})$ is a cochain algebra.  

Let $X$ be a simply connected space  with rational homology of finite type and $A_{PL}(X)$ its commutative cochain algebra. A Lie model for $X$ is a connected chain Lie algebra $(L; d_{L})$ of finite type with a differential graded algebras quasi-isomorphism
\[
m : C^{*}(L; d_{L}) \stackrel{\cong}{\rightarrow} A_{PL}(X).
\] 
Any such space $X$ has a minimal free Lie model which is unique, up to isomorphism. For  a minimal free model $(\LA _{V}, d)$   for $X$, it is known  that there exists a Lie algebra isomorphism 
\begin{equation}\label{isom}
H(\LA _{V}, d) \stackrel{\cong}{\rightarrow} \pi _{*}(\Omega X)\otimes _{\Z} \Q ,
\end{equation}
 where the Lie algebra structure on $\pi _{*}(\Omega X)\otimes _{\Z} \Q$ is given by the Whitehead product. This isomorphism is dual to the isomorphism $H(\wedge V_{X})\stackrel{\cong}{\rightarrow} H^{*}(X, \Q)$ in the context of Sullivan algebras, where $\wedge V_{X}$ denotes the minimal model for $X$. The  isomorphism~\eqref{isom} suspends to 
\begin{equation}\label{isom2}
sH(\LA _{V}, d)\stackrel{\cong}{\rightarrow} \pi _{*}(X)\otimes _{\Z }\Q. 
\end{equation}

Assume now that $X$ is an adjunction space, meaning that
\[
X = Y \cup _{f}(\cup _{\alpha}D^{n_{\alpha}+1}), \; \text{where}:
\]
\begin{itemize}
\item $Y$ is simply-connected space with rational homology of finite type,
\item $f = \{ f_{\alpha} : (S^{n_{\alpha}}, \ast )\to (Y, y_{0})\}$,
\item the cells $D^{n_{\alpha}+1}$ are all of dimension $\geq 2$, with finitely many of them  in any dimension.
\end{itemize}
Let $\tau : sH(\LA _{V},d)\stackrel{\cong}{\rightarrow} \pi _{*}(X)\otimes \Q$ be an isomorphism, where $(\LA _{V}, d)$  is  a minimal free Lee model for $X$. Using  this isomorphism the classes $[f_{\alpha}]\in \pi _{n_{\alpha}}(X)$ determine the classes $s[z_{\alpha}] = \tau ^{-1}([f_{\alpha}])$ represented by the cycles $z_{\alpha}\in \LA _{V}$. Let $W$ be a graded vector space with basis $\{w_{\alpha}\}$ such that $\deg w_{\alpha} = n_{\alpha}$. One can extend $\LA _{V}$ to $\LA _{V\oplus W}$ by putting  $dw_{\alpha} = z_{\alpha}$. It is proved in~\cite{FHT} that the chain algebra $(\LA _{V\oplus W}, d)$ is a Lie model for $X$.

\subsection{On rational homotopy groups of simply connected four-manifolds} In this section, appealing to  the methods of  rational homotopy theory, we prove:

\begin{prop}
The rational homotopy groups of a simply-connected four manifold depend only on its second Betti number.
\end{prop}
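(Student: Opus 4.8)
The plan is to build an explicit minimal free Lie model for $M$ out of the adjunction-space machinery recalled above, and then to observe that, after extending scalars to $\C$, this model depends only on $b_2 = b_2(M)$. Since $M$ is a closed simply-connected four-manifold, it is homotopy equivalent to a CW complex $(\vee_{b_2} S^2)\cup_f D^4$ with one $0$-cell, $b_2$ two-cells, and a single four-cell attached by a map $f:S^3\to Y$, $Y=\vee_{b_2}S^2$ (the classical handle decomposition with handles of index $0,2,4$ only). The minimal free Lie model of $Y$ is $(\LA_{V},0)$, where $V$ is spanned by $v_1,\dots,v_{b_2}$ in degree $1$ and the differential vanishes; its homology is the free graded Lie algebra $\LA_{V}$ itself.

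Next I would identify the attaching class. Under the isomorphism $\tau:sH(\LA_{V},0)\to\pi_*(Y)\otimes\Q$, the class $[f]\in\pi_3(Y)\otimes\Q$ corresponds to $s[z]$ for a degree-$2$ cycle $z\in\LA_{V}$. As the generators $v_i$ have odd degree, the degree-$2$ part of $\LA_{V}$ is spanned by the symmetric brackets $[v_i,v_j]$ with $i\le j$, and the coefficients of $z$ in this basis are precisely the entries of the intersection form $Q_{M}$, since the relevant Whitehead products are dual to the cup products appearing in~\eqref{cohom}. By the construction from~\cite{FHT} recalled above, a Lie model for $M$ is then $(\LA_{V\oplus W},d)$ with $W=\langle w\rangle$ in degree $3$, $dv_i=0$, and $dw=z=\sum_{i,j}(Q_{M})_{ij}[v_i,v_j]$; the differential has no linear part, so this model is minimal and $H(\LA_{V\oplus W},d)\cong\pi_*(\Omega M)\otimes\Q$, whence the ranks of $\pi_*(M)\otimes\Q$ are the dimensions of $sH(\LA_{V\oplus W},d)$.

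Finally I would kill the signature by extending scalars. Dimensions of homology are unchanged under the flat extension $\Q\hookrightarrow\C$, so it suffices to work over $\C$. By Poincar\'e duality $Q_{M}$ is nondegenerate, and over $\C$ every nondegenerate symmetric bilinear form is equivalent to the identity: one can choose a linear automorphism $\phi$ of $V\otimes\C$ with $\sum_i[\phi v_i,\phi v_i]=z$ (equivalently $AA^{T}=Q_{M}$ for the matrix $A$ of $\phi$, which is solvable since $Q_M$ is symmetric and invertible over an algebraically closed field). Extending $\phi$ by $\phi(w)=w$ gives an isomorphism of chain Lie algebras from the standard model with $dw=\sum_{i=1}^{b_2}[v_i,v_i]$ onto $(\LA_{(V\oplus W)\otimes\C},d)$. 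The standard model depends only on $b_2$, so the dimensions of its homology do too, and therefore $\rk\pi_k(M)$ is a function of $b_2$ alone.

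The two places where genuine input is needed are (i) the identification of $z$ with the intersection form via the Whitehead--cup-product duality, and especially (ii) the passage to $\C$, which is what renders the signature invisible. I expect (ii) to be the conceptual crux: over $\R$ the diagonalized models for different signatures are not obviously isomorphic, since one cannot flip the sign of a bracket $[v_i,v_i]$ by a real rescaling $v_i\mapsto\lambda v_i$ (as $\lambda^2>0$); it is only after complexification that all nondegenerate forms coincide, so that the signature drops out and the ranks are seen to depend on $b_2$ alone.
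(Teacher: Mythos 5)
Your proof is correct, and its skeleton is the same as the paper's: the homotopy decomposition $M \simeq (\vee_{b_2}S^2)\cup_f D^4$, the Lie model $(\LA_V,0)$ of the wedge with $\deg v_i=1$, and the extension to a Lie model $(\LA_{V\oplus W},d)$, $\deg w=3$, $dw=z$, for $M$. The difference lies in how the conclusion is reached. The paper ends by asserting, without argument, that the ranks of $H(\LA_{V\oplus W},d)$ depend only on $b_2(M)$; since $z$ encodes the whole intersection form, this is precisely the point where the signature could a priori intervene, so the assertion genuinely needs justification. Your two supplementary steps provide it: first, the identification of $z$, via Whitehead--cup-product duality, with $\sum_{i,j}(Q_M)_{ij}[v_i,v_j]$ (up to harmless diagonal normalization factors of $2$ coming from $[\iota_i,\iota_i]=2\eta_i$ in $\pi_3(S^2)$, which affect nothing since only nondegeneracy of the coefficient matrix is used); second, the passage to $\C$, where nondegeneracy of $Q_M$ (Poincar\'e duality) makes its matrix congruent to the identity, so that your $\phi$ extends to an isomorphism of chain Lie algebras from the standard model with $dw=\sum_i[v_i,v_i]$ onto the complexified model of $M$. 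Since free graded Lie algebras and homology both commute with the flat extension $\Q\hra\C$, this does show that the homology ranks, hence the numbers $\rk \pi_k(M)$, are functions of $b_2$ alone. So your proposal is not merely the paper's proof in other words: it closes the logical gap at the paper's final ``we note that'' step, and your diagnosis that complexification is the conceptual crux (a real rescaling cannot flip the sign of $[v_i,v_i]$, so the diagonalized models of different signatures are not obviously isomorphic over $\R$) is exactly right.
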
 

\begin{proof}
It is the  classical result, see ~\cite{M1} and~\cite{W},  that a simply-connected four-manifold $M$ is homotopically an adjunction space:
\begin{equation}\label{cof}
M \cong (\vee _{b_{2}(M)}S^{2})\cup _{f} D^{4}.
\end{equation}
A Lie model for the space $X = \vee _{\alpha}S^{n_{\alpha}+1} = pt \cup _{f}(\cup _{\alpha}D^{n_{\alpha}+1})$ is (see~\cite{FHT}) given by $(\LA _{V}, 0)$, where $V=\{V_{i}\}_{i\geq 1}$ is a vector space of finite type with basis $v_{\alpha}$ such that $\deg v_{\alpha} = n_{\alpha}$. It implies that a Lie model for $\vee _{b_{2}(M)}S^{2}$ is given by
$(\LA _{V},0)$, where $V$ has a basis $v_1,\ldots ,v_{b_{2}(M)}$ such that $\deg v_{\alpha}=1$ for all $1\leq \alpha \leq b_{2}(M)$. For  $[f]\in \pi _{3}(\vee _{b_{2}(M)}S^{2}, x_{0})$ denote by $z$, according to the previous section,  a cycle in $\LA _{V}$  such that $s[z]=\tau ^{-1}([f])\in sH(\LA _{V})$. We have that $\deg z= 2$. Let $W = \LLL (w)$  be a vector space with the basis $w$ such that $\deg w=3$.  

Then the Lie model for $M$ is given by 
\[
(\LA _{V\oplus W} = \LA _{\LLL(v_1,\ldots ,v_{b_2},w)}, d),\; \text{where}\; dv_i = 0\; dw = z \in \LA _{V}.
\]
We note that  the ranks of the cohomology groups in $H(\LA _{\oplus W}, d)$  depend  only on $b_{2}(M)$. Together with~\eqref{isom} this proves the statement.
\end{proof}

\section{On gauge groups and spaces of connections} 
Let $\pi : P\to M$ be a $G$-principal bundle, where $G$ is a compact semisimple simply connected Lie group and $M$ is a compact simply connected four-manifold. The gauge group  $\GG$ of this principal bundle is the group of $G$-equivariant
automorphisms of $P$  which induce identity on the base. Let $\AAA$ denotes the space of all connections and $\AAA ^{*}$ the space of all irreducible connections on the bundle $P$. It is assumed that these spaces are equipped with certain Sobolev topologies $L_{p-1}^{2}$ for $\AAA$ and $L_{p}^{2}$ for $\GG$, where $p$ is large enough. 

The action of $\GG$ on $\AAA$ and $\AAA ^{*}$ is not free in general. Instead one should consider  the group $\GG _{0}$ of the gauge transformations  for $P$ which fix one fiber. The group $\GG _{0}$  acts freely on the spaces of connections and irreducible connections. The group $\tilde{\GG} = \GG/Z(G)$ also acts  freely on $\AAA ^{*}$, where $Z(G)$ is the center of the group $G$, which gives a fibration
\begin{equation}\label{fib}
\tilde{\BB} = \AAA /\GG _{0},\;\; \tilde{\BB}^{*} = \AAA ^{*}/\GG _{0},\;\; \BB ^{*} =\AAA ^{*}/\tilde{\GG}.
\end{equation}
It is known, see~\cite{D} that $\AAA$ is contractible, then $\AAA ^{*}$ is weakly homotopy equivalent to $\AAA$ and $\tilde{\BB}^{*}$ is weakly equivalent to $\tilde{\BB}$.

The well known result of Singer~\cite{SI} states that the weak homotopy type of  $\GG _{0}$ is independent of $P$:
\begin{equation}\label{sin}
\GG _{0}\approx Map _{*}(M, BG).
\end{equation}
This means  that $\GG _{0}$ is weakly homotopy equivalent to the pointed mapping space $Map_{*}(M, BG)$.

Let $\GG ^{e}$ be the identity component of the gauge group $\GG$. The rational cohomology algebras  of the spaces $\GG ^{e}$, $\tilde{\BB}$ and   $\BB ^{*}$ are computed in~\cite{TI} and they are as follows:
\begin{itemize}
\item $H^{*}(\GG ^{e})$ is an exterior algebra in $(b_{2}(M)+2)\rk G-1$  generators of odd degree, where the number of generators in degree $j$ is equal to $b_{2}(M)\rk \pi _{j+2}(G)+\rk \pi _{j}(G)+\rk \pi _{j+4}(G)$.
\item $H^{*}(\tilde{\BB})$ is a polynomial algebra in $(b_{2}(M)+1)\rk G-1$ generators of even degree, where the number of generators of degree $j$ is equal to $b_{2}(M)\rk \pi _{j+1}(G) + \rk \pi _{j+3}(G)$.
\item $H^{*}(\BB ^{*})$ is a polynomial algebra in $(b_{2}(M)+2)\rk G-1$ generators of even degree, where the number of generators in degree $j$ is equal to $b_{2}(M)\rk \pi _{j+1}(G) + \rk \pi _{j-3}(G) + \rk \pi _{j+3}(G)$.
\end{itemize}
 
\begin{rem}\label{minmod} Since for all these  spaces their rational cohomology algebras are free,  they are all formal in the sense of  rational homotopy theory as remarked in~\cite{TI}. Therefore the minimal models, in the sense of rational homotopy theory, for the spaces $\GG ^{e}$, $\tilde{\BB}$ and   $\BB ^{*}$ coincide with the minimal models for their rational cohomology algebras. Furthermore, being free, their rational cohomology algebras coincide with their minimal models.  
\end{rem}

\subsection{The rational Pontrjagin homology ring of a based loop space}
For a topological spaces $X$ its based loop space $\Omega X$ is an $H$-space where one of possible multiplication of the loops is given by loop concatenation. The ring structure induced in $H_{*}(\Omega X)$ is called Pontrjagin homology ring. We are interested in Pontrjagin homology ring of the spaces $\GG$, $\tilde{\BB }$ and $\BB ^{*}$.
  
Let us   recall the theorem of Milnor and Moore and some constructions from the  rational homotopy theory which will be directly  applied  to describe the  Pontrjagin homology rings of the spaces we consider. The theorem of Milnor-Moore~\cite{MM}  states that the rational homology algebra of the based loop space $\Omega X$ of a simply connected space $X$ is given by
\begin{equation}\label{MM}
H_{*}(\Omega X)\cong UL _{X} \cong T(L _{X})/\langle xy-(-1)^{\deg x\deg y}yx-[x,y]\rangle,
\end{equation}
where $L_{X}$ is  the rational homotopy Lie algebra of $X$ and $UL _{X}$ is the universal enveloping algebra for $L_{X}$. The algebra $L_{X}$ is a graded Lie algebra defined by  $L_X=(\pi_*(\Omega X)\otimes \Q)$ and the commutator $[\, , \,]$ is given by the Samelson product. 

It is the result of  rational homotopy theory~\cite{FHT} that there is an isomorphism between the rational homotopy Lie algebra
$L_X$ and the homotopy Lie algebra $L$ of the minimal model for $X$. The homotopy Lie algebra $L$   is defined as follows.  Let $(\wedge V, d)$ be a minimal model for $X$. Then $sL=\Hom(V, \Q)$, where the suspension $sL$ is    defined in the standard way by $(sL)_i=(L)_{i-1}$. The Lie brackets  $ L$ are defined by
\begin{equation}
\label{bracket}
 \langle v; s[x,y]\rangle= (-1)^{\deg y+1}\langle
 d_1v;sx,sy\rangle \quad \text{ for } x,y\in L, v\in V.
 \end{equation}
Here $d_1$ is quadratic part in the differential $d$ for the minimal model $(\wedge V, d)$ and it is defined  by
$d-d_1 \in \wedge ^{k\geq 3}V$. Thus $d_1v = v_1\wedge v_2$ for some $v_1,v_2\in V$ and the expression  $\langle
 d_1v;sx,sy\rangle$ is defined by
\[
\langle v_1\wedge v_2; sx, sy\rangle=\langle
v_{1}; sx\rangle\langle v_{2};sy\rangle - \langle
v_{2}; sx\rangle\langle v_{1};sy\rangle
\]

In order to apply Milnor-Moore theorem we will assume that the spaces $ \tilde{\BB }$ and $ \BB ^{*}$ are simply connected. Using~\eqref{fib} we see that  that  this condition is equivalent to the condition that the groups $\GG _{0}$ and $\tilde{\GG}$ are connected, which  is further equivalent to the condition that the gauge group $\GG$ is connected.

\begin{thm}\label{homol} Assume that the gauge group $\GG$ is connected. Then the rational Pontrjagin homology  rings   of the based loop spaces $\Omega \tilde{\BB}$ and $\Omega \BB ^{*}$ are as follows:
\begin{itemize}
%\item $H_{*}(\Omega \GG ^{e})$ is a polynomial algebra in $(b_{2}(M)+2)\rk G-1$ even degree generators, where the number of generators in degree $j$ is  $b_{2}(M)\rk \pi _{j+3}(G)+\rk \pi _{j+1}(G)+\rk \pi _{j+5}(G)$.
\item $H_{*}(\Omega \tilde{\BB})$ is an exterior algebra in $(b_{2}(M)+1)\rk G-1$ generators of odd degree, where the number of generators of degree $j$ is equal to $b_{2}(M)\rk \pi _{j+2}(G) + \rk \pi _{j+4}(G)$.
\item $H_{*}(\Omega \BB ^{*})$ is an exterior algebra in $(b_{2}(M)+2)\rk G-1$ generators of odd degree, where the number of generators in degree $j$ is equal to $b_{2}(M)\rk \pi _{j+2}(G) + \rk \pi _{j}(G) + \rk \pi _{j+4}(G)$.
\end{itemize}
 \end{thm}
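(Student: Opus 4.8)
The plan is to let formality do essentially all the work, reducing the computation to the Milnor--Moore theorem~\eqref{MM} together with a degree shift applied to the cohomological data of~\cite{TI}. By hypothesis $\GG$ is connected, so $\tilde{\BB}$ and $\BB^{*}$ are simply connected and~\eqref{MM} is available. By Remark~\ref{minmod} these spaces are formal and their minimal Sullivan models coincide with their rational cohomology algebras carrying the zero differential. Since each of $H^{*}(\tilde{\BB})$ and $H^{*}(\BB^{*})$ is a \emph{polynomial} algebra on generators of even degree, the minimal model is $(\wedge V, 0)$ with $V$ concentrated in even degrees, and $\dim V_{j}$ equals the number of polynomial generators of the corresponding cohomology algebra in degree $j$.

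First I would read off the homotopy Lie algebra $L$ of each model. Since $sL = \Hom(V,\Q)$ and $V$ lives in even degrees, $L$ is concentrated in odd degrees. Because the differential is identically zero its quadratic part $d_{1}$ vanishes, so the defining formula~\eqref{bracket} forces every bracket in $L$ to be zero; thus $L$ is an \emph{abelian} graded Lie algebra concentrated in odd degrees. Transporting this along the isomorphism between $L$ and the rational homotopy Lie algebra $L_{X}$ recalled before~\eqref{MM}, the same conclusion holds for $L_{X}$.

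Next I would invoke the Milnor--Moore isomorphism $H_{*}(\Omega X)\cong UL_{X}$. For an abelian graded Lie algebra the relations in~\eqref{MM} reduce to graded commutativity, so $UL_{X}$ is the free graded-commutative algebra on $L_{X}$; as $L_{X}$ sits in odd degrees every generator squares to zero over $\Q$, and this algebra is precisely the exterior algebra on $L_{X}$. Hence both $H_{*}(\Omega\tilde{\BB})$ and $H_{*}(\Omega\BB^{*})$ are exterior algebras whose spaces of generators are $L_{X}$.

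What remains is bookkeeping: the degree $j$ part of $L_{X}$ has dimension $\dim V_{j+1}$, i.e. the number of cohomology generators of the relevant space in degree $j+1$, since looping desuspends all degrees by one. Substituting the generator-counting formulas of~\cite{TI} evaluated in degree $j+1$ then produces the claimed counts, and the total number of generators is unchanged by the shift, giving $(b_{2}(M)+1)\rk G-1$ and $(b_{2}(M)+2)\rk G-1$ respectively. I do not expect a genuine obstacle here; the one substantive point is that formality together with \emph{freeness} of the cohomology is exactly what guarantees $d_{1}=0$, hence the abelianness of $L_{X}$, which is in turn what collapses the universal enveloping algebra to an exterior algebra rather than a larger one. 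The only remaining care is to track the desuspension shifting all degrees down by one when transcribing the formulas of~\cite{TI}.
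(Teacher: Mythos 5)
Your proposal is correct and is essentially the paper's own proof: formality plus freeness of the cohomology forces the minimal models of $\tilde{\BB}$ and $\BB^{*}$ to have zero differential, hence vanishing quadratic part $d_{1}$, hence abelian homotopy Lie algebras by~\eqref{bracket}, and Milnor--Moore~\eqref{MM} then identifies the Pontrjagin ring with the exterior algebra on the desuspended cohomology generators. The paper's proof is exactly this argument, stated more tersely.

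One concrete warning about your last step, though. You assert that substituting the generator counts of~\cite{TI} in degree $j+1$ ``produces the claimed counts.'' For $\tilde{\BB}$ it does; for $\BB^{*}$ it does not: the formula quoted in the paper gives in degree $j+1$ the count $b_{2}(M)\rk\pi_{j+2}(G)+\rk\pi_{j-2}(G)+\rk\pi_{j+4}(G)$, whereas the theorem asserts $b_{2}(M)\rk\pi_{j+2}(G)+\rk\pi_{j}(G)+\rk\pi_{j+4}(G)$. The mismatch is not a flaw in your method but a typo in the paper's transcription of~\cite{TI}: the middle term in the count for $H^{*}(\BB^{*})$ should be $\rk\pi_{j-1}(G)$, not $\rk\pi_{j-3}(G)$. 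Indeed, since $\AAA^{*}$ is weakly contractible and $\tilde{\GG}$ acts freely on it, $\Omega\BB^{*}$ is weakly equivalent to $\tilde{\GG}$, whose rational homology coincides with that of $\GG^{e}$ when $\GG$ is connected (the finite center acts by translations, which in a connected group are homotopic to the identity); the first bullet quoted from~\cite{TI} then gives precisely the theorem's count. The paper's own $SU(3)$ example confirms this: the theorem (and the corrected shift) give generators in degrees $1,3,5$ with multiplicities $b_{2}+1$, $b_{2}+1$, $1$, while the shift of the quoted formula would wrongly give degrees $1,3,5,7$ with multiplicities $b_{2}+1$, $b_{2}$, $1$, $1$. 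So your proof is fine once this correction is made explicit; as written, the final substitution claim for $\BB^{*}$ does not check out against the paper's stated input.
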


\begin{proof}
By Remark~\ref{minmod} we have that the differentials  in the minimal models for the spaces $\tilde {\BB}$ and $\BB ^{*}$  are trivial implying that    their quadratic parts  $d_1$  are  trivial as well. The definition of the brackets given by~\eqref{bracket} implies  that the corresponding homotopy Lie algebras are commutative. It implies,  by~\eqref{MM} that $UL$ is commutative and, therefore  the loop space homology algebra for these spaces coincide with their rational cohomology algebras with the degrees of generators shifted by $-1$. 
\end{proof}

Let us comment on the assumption that the gauge group $\GG$ is connected. The group $\GG$ is not connected in general
The result of Singer~\eqref{sin} together with the homotopy cofibration~\eqref{cof} gives the exact homotopy sequence, see~\cite{TI}:
\begin{equation}
 \pi _{3}(G)\leftarrow \oplus _{b_{2}(M)}\pi _{2}(G)\leftarrow [M; BG]\leftarrow \pi _{4}(G)\leftarrow \ldots .
\end{equation}
It implies that  $\GG _{0}$ is connected if $\pi _{2}(G) = \pi _{4}(G) =0$. This  condition is satisfied  when
 $G=SU(n)$ for $n\geq 3$ and $G=Spin (n)$ for $n\geq 6$ (this follows from the Bott periodicity).

 \begin{ex}
 Let us consider the  $SU(3)$-principal bundles over a simply connected  four-manifold $M$.  The gauge group of any such bundle is connected. It is well known that $\rk \pi _{3}(SU(3)) = \rk \pi _{5}(SU(3)) = 1$ and $\rk \pi _{j}(SU(3))=0$ for $j\neq 3,5$.  Therefore, Theorem~\ref{homol} implies:
 
 \[
 H_{*}(\Omega \tilde{\BB }) \cong \wedge (x_1,\ldots ,x_{b_{2}(M)+1}, y_{},\ldots ,y_{b_{2}(M)}),\; \dg x_i = 1,\; \dg y_{j}=3.
 \]
 \[
 H_{*}(\Omega \BB ^{*}) \cong \wedge  (x_1,\ldots ,x_{b_{2}(M)+1},y_1,\ldots ,y_{b_{2}(M)+1}, z),
 \]
 \[ \dg x_i =1,\; \dg y_{j}=3,\; \dg z=5.
 \]
 \end{ex} 
 \begin{ex}
 Let us now consider the  $SU(2)$-principal bundles over a simply connected  four-manifold $M$.  It is well known that  $\pi _{4}(SU(2))=\Z _{2}$. In this case the  fundamental  group  $\pi _{1}(\BB ^{*})$ depends on the  $SU(2)$-principal bundle $P\to M$.  This fundamental group is   computed in~\cite{O} and  we recall these results.   For $G=SU(2)$ we have that $\pi _{0}(\GG ) = [M, S^3]$ and due to Steenrod theorem it follows:
 \[
 \pi _{0}(\GG ) = \left\{
 \begin{array}{cc}
 0, & \text{if the intersection form for $M$ is odd}\\
 \Z _{2}, & \text{if the intersection form for $M$ is even}.  
 \end{array}
 \right .
\]  
It implies, using~\eqref{fib} that
\[
 \pi _{1}(\tilde{\BB} ) = \left\{
 \begin{array}{cc}
 0, & \text{if the intersection form for $M$ is odd}\\
 \Z _{2}, & \text{if the intersection form for $M$ is even}.  
 \end{array}
 \right .
\]
\begin{itemize}
\item Assume that  the intersection form for $M$ is odd.  Theorem~\ref{homol} gives
 \[
 H_{*}(\Omega \tilde{\BB}) \cong \wedge  (x_1,\ldots ,x_{b_{2}(M)}),\;\; \dg x_i =1.
 \]
  
Also the fibration $Z(G)\to \GG \to \tilde{\GG}$ gives the exact sequence
\begin{equation}\label{seq}
\ldots \rightarrow \Z _{2}\stackrel{j_{*}}{\rightarrow} \pi _{0}(\GG )\to \pi _{0}(\tilde{\GG})\to 0,
\end{equation}
which  implies that $\tilde {\GG}$ is connected meaning that $\BB ^{*}$ is simply connected. Therefore in this case Theorem~\ref{homol} gives
\[
 H_{*}(\Omega \BB ^{*}) \cong \wedge  (x_1,\ldots ,x_{b_{2}(M)},y),\; \dg x_i =1,\; \dg y =3.
 \]
 \item  Assume that  the intersection form for $M$  is even.   It is proved in~\cite{O} that the map $j_{*}$  from~\eqref{seq} depends on the second Chern number of the principal $SU(2)$-bundle $P$ over $M$. More precisely,  it is proved  that $j_{*}$ is $0$-map when $c_{2}(P)$ is even and $j_{*}$ is surjective when $c_{2}(P)$ is odd. It implies that $\pi _{0}(\tilde{\GG}) = \Z _{2}$ if $c_{2}(P)$ is even and $\pi _{0}(\tilde{\GG }) = 0$ if $c_{2}(P)$ is odd. Thus   
 \[
 \pi _{1}(\BB ^{*}) = \left\{
 \begin{array}{cc}
 \Z_{2}, & \text{$c_{2}(P)$ is even}\\
 0,  &  \text{$c_{2}(P)$ is odd}.
 \end{array}
 \right .
 \]
 Therefore when the  intersection form for $M$ is even and the second Chern number  of the  $SU(2)$-principal bundle $P\to M$  is odd, Theorem~\ref{homol} gives that:
 \[
 H_{*}(\Omega \BB ^{*}) \cong \wedge  (x_1,\ldots ,x_{b_{2}(M)},y),\; \dg x_i =1,\; \dg y =3.
 \]
 \end{itemize}
\end{ex} 
\bibliographystyle{amsalpha}

\end{document}